\title{A Remark on Lazarsfeld's Approach to Castelnuovo-Mumford Regularity}
\date{}
\author{\scshape J\"urgen Rathmann}
\let\runauthor\@author
\setlist{nolistsep}
\newtheorem{theorem}{Theorem}[section]
\newtheorem{proposition}[theorem]{Proposition}
\newtheorem{lemma}[theorem]{Lemma}
\newtheorem{corollary}[theorem]{Corollary}
\newtheorem*{ThA}{Main Theorem}
\theoremstyle{remark}
\newtheorem{remark}[theorem]{Remark}
\newtheorem{num}[theorem]{}
\newtheorem{examples}[theorem]{Examples}
\newtheorem{definition}[theorem]{Definition}
\newcommand\cB{\mathscr{B}}
\newcommand\cE{\mathscr{E}}
\newcommand\cF{\mathscr{F}}
\newcommand\cG{\mathscr{G}}
\newcommand\cI{\mathscr{I}}
\newcommand\cO{\mathscr{O}}
\newcommand\bP{\mathbf{P}}
\newcommand\cT{\mathscr{T}}
\newcommand\ooplus{\mathop\oplus\limits}
\DeclareMathOperator{\Ext}{Ext}
\DeclareMathOperator{\Ker}{Ker}
\DeclareMathOperator{\Img}{Im}
\DeclareMathOperator{\codim}{codim}
\DeclareMathOperator{\rk}{rk}
\DeclareMathOperator{\reg}{reg}
\DeclareMathOperator{\len}{length}
\begin{document}
\bibliographystyle{plain}
\maketitle

{\small 
We derive new bounds for the Castelnuovo-Mumford regularity of the ideal sheaf of a 
complex projective manifold of any dimension. They depend linearly on the coefficients of the 
Hilbert polynomial, and are optimal for rational scrolls, but most likely not for other varieties.
Our proof is based on an observation of Lazarsfeld in his approach for surfaces and does
not require the (full) projection step.
We obtain a bound for each partial linear projection of the given variety, as long
as a certain vanishing condition on the fibers of a general projection holds.
}

\section*{Introduction}

Consider a reduced irreducible complex projective variety $X\subset \bP^r$ of dimension $n$, not contained in a hyperplane.
We recall that a sheaf $\cF$ is $m$-regular, if the cohomology groups $H^i(\bP^r,\cF(m-i))$ are zero for $i>0$. 
The smallest such $m$ for the ideal sheaf $\cI_X$ is called the regularity $\reg(X)$ of $X$, 
and it has the following properties:
\begin{enumerate}
\item[(1)] Hypersurfaces of degree $\ge m-1$ cut out a complete linear system on $X$.
\item[(2)] Higher cohomology groups $H^i\cO_X(l)$ ($i>0$) vanish for any $l\ge m-i-1$.
\item[(3)] The homogeneous ideal of $X$ is generated by hypersurfaces of degree $\le m$; more generally, the 
terms of the $l$-th syzygy of the minimal graded resolution of the homogeneous ideal of $X$ have degrees $\ge -m-l$. 
\end{enumerate}

In the early 1980s, Gruson, Lazarsfeld and Peskine \cite{GLP} investigated the regularity of curves and suggested 
that under suitable conditions on $X$ one might have
\begin{equation*}
\reg(X)\le \deg(X)+1-\codim(X).
\end{equation*}

This bound would be sharp. There are examples of smooth rational scrolls 
with a $(\deg(X)+1-\codim(X))$-secant line, at least in the range $r\ge 2\dim(X)+1$ \cite{Laz1}. Such varieties $X$
cannot be cut out by hypersurfaces of degree $\deg(X)-\codim(X)$, hence their ideal sheaf cannot be 
$(\deg(X)-\codim(X))$-regular.

A lot of work focused on smooth varieties $X$ of low dimension, e.g, the conjecture is known 
for surfaces \cite{Laz1} and there exist close bounds for $\dim(X)\le 4$ \cite{Kwak, Ran1}. For further information, see \cite{Laz2} 
and \cite{Kwak}.

The best available bounds for general $X$ of arbitrary dimension involve a multiple of the degree 
(Mumford \cite{BM}: $\reg(X)\le (n+1)(d-2)+2$; Bertram, Ein and Lazarsfeld \cite{BEL}: $\reg(X)\le c(d-1)+1$ with $c=\min(n+1,r-n)$).

Eisenbud and Goto \cite{EisGoto} conjectured that the bound would hold for
arbitrary reduced and irreducible $X$, but this has been disproved by
recent examples of McCullough and Peeva \cite{McCP}.

\medskip
Our contribution is a new sequence of bounds for the regularity of $X$ which depend linearly on the coefficients 
of the Hilbert polynomial of $\cO_X$. They require nonsingularity.

\begin{ThA}
Let $X\subset\bP^r$ be an $n$-dimensional non-degenerate smooth irreducible complex manifold. 
Then we have
\begin{equation*}
\reg(X)\le -(r-m)+\sum_{k=0}^n(-1)^{n+k}\binom{m-1}{n-k}\chi(\cO_X(k+1-n))
\end{equation*}
for any $m$ with
$m_0\le m\le r$\textup{,} where
\begin{equation*}
m_0=\begin{cases}
2 & n=1 \\
\min(r,2n-1) & \text{$n\ge 2$.}
\end{cases}
\end{equation*}
\end{ThA}

To put these bounds into perspective, we list three special cases:
\begin{enumerate}
\item[(i)] If $X$ is a curve of degree $d$ and genus $g$, then
\begin{equation*}
\reg(X)\le d+2+(m-2)g-r.
\end{equation*}
\item[(ii)] If $X$ is a surface of degree $d$, sectional genus $\pi$ and Euler characteristic $\chi$, then
\begin{equation*}
\reg(X)\le d+m(m-3)/2\cdot (\pi-1)-(m-2)(m-3)/2\cdot \chi -(r-m).
\end{equation*}
\item[(iii)] If $X$ is ruled over a curve of genus $g$, embedded as a scroll of degree $d$, then
\begin{equation*}
\reg(X)\le d+(m-1-n)g+n-r+1.
\end{equation*}
\end{enumerate}

In each case, we formally recover the bound of the conjecture by setting $m=1+\dim X$. This is not surprising since
our proof follows the strategy used by Lazarsfeld, which would prove the conjecture in all dimensions, if we knew certain
properties of a general projection into $\bP^{n+1}$ (see \ref{num12} below).

For rational scrolls, our bound does not depend on $m$ and is optimal (a result of Bertin \cite{Bertin};
there is another proof by Kwak and E.~Park \cite{KwakE}). For 
other scrolls, we happen to recover recent bounds of Niu and J.~Park \cite{Niu}, clarifying Bertin's earlier work. 
Both Bertin and Niu-Park analyze the intrinsic geometry of the scroll, in the spirit of \cite{GLP}.

For general varieties, our bound should be compared with the bounds of Bertram-Ein-Lazarsfeld resp.\ Mumford: It 
is linear in the coefficients of the Hilbert polynomial, and a closer inspection reveals that the coefficient 
of the degree $d$ in this sum is always $1$. However, in any concrete example this advantage  
could be more than offset by contributions from the additional terms.

The lower bound for $m$ in Theorem A reflects our knowledge (or rather lack thereof) about the geometry of
the fibers of a general projection and is unlikely to be optimal. In the range $4n/3<r\le 2n$, $m_0$ could be lowered, 
at the expense of a more complicated bound (see (\ref{num19}) below). 

\bigskip
Our proof of Theorem A is based on a cohomological vanishing pattern noted by Lazarsfeld.
It implies that the Beilinson spectral sequence for $\cI_X(1-n)$ provides a complex
\begin{equation*}
0\to a_r\cO(-r)\to a_{r-1}\cO(-r+1)\to \dotsc \to a_1\cO(-1)\to a_0\cO\to 0
\end{equation*}
whose only non-vanishing cohomology sheaf occurs at the position $a_{n+1}\cO(-n-1)$ and is isomorphic to $\cI_X(1-n)$.
Hence there is a short exact sequence
\begin{equation*}\tag{$*$}
0\to\cG(-n-2)\to\cE(-n-1)\to\cI_X(1-n)\to 0
\end{equation*}
where the sheaf $\cG$ is $0$-regular, and the sheaf $\cE$ is part of an exact sequence
\begin{equation*}\tag{$**$}
0\to \cE(-n-1)\to a_n\cO(-n-1)\to a_{n-1}\cO(-n)\to \dotsc \to a_1\cO(-1)\to a_0\cO\to 0.
\end{equation*}
The sequence $(**)$ shows that $\cE$ is locally free, and that its dual is $0$-regular. The latter implies
 a regularity bound for $\cE$ and, via the sequence $(*)$, for $\cI_X$.

It should be obvious that our bounds are not optimal, except for rational scrolls. 
Our motivation for the results in this note was to find an approach to regularity bounds
which avoids studying the fibers of a general projection into $\bP^{n+1}$.

Further progress requires a new idea.

This paper is organized as follows. In section 1 we prove the main result. Section 2 
discusses the geometry underlying the bundles $\cE$ and related approaches. 
In the last section we mention some open problems.

\medskip
I am grateful to Wenbo Niu for his comments on an earlier version of the paper.

\section{Proofs}

Our approach produces regularity bounds for sheaves that fulfill certain cohomological vanishing conditions. The
following definition serves to separate these conditions from the formal computation of the bounds. They will be
verified for ideal sheaves of complex projective manifolds in (\ref{prop13}).

\begin{definition}
Let $\cF$ be a coherent sheaf on $\bP^r$. $\cF$ satisfies property $(C_k)$ if the following two conditions are satisfied:
\begin{enumerate}
\item[(i)] $H^i\cF(-i-1)=0$ for $i< k$.
\item[(ii)] $H^i\cF(-i)=0$ for $i>k$.
\end{enumerate}
\end{definition}

Property $(C_0)$ is the same as being regular. We are interested in sheaves satisfying $(C_k)$ 
for some $k>0$, and we will derive a bound on their regularity.

\begin{num}[Notation for linear projections]
\label{num12}

Let $X\subset\bP^r$ be an irreducible complex manifold of dimension $n$, not contained in a hyperplane, let $\cI_X$
be its ideal sheaf.
Further, let $L\subset\bP^r$ be a linear subspace of dimension $r-m-1$ not meeting $X$. Let
$p\colon P\to\bP^r$ be the blow up of $\bP^r$ in $L$, and $q\colon P\to\bP^m$ be the linear projection from $L$, and 
$f\colon X\to\bP^m$ the restriction of the projection. We assume the following:
\begin{enumerate}
\item[(i)] $f$ is finite,
\item[(ii)] $R^1q_*p^*\cI_X(2)=0$.
\end{enumerate}
The second condition holds in particular if the fibers of the linear projection $f$ have length at most $3$.
\end{num}

The following proposition also applies to $\cI_X(2)$ which can be viewed as the 
degenerate case $L=\emptyset$, $p=q=id_{\bP^r}$.

\begin{proposition}[Lazarsfeld]\label{prop13}
Consider the situation described in \textup{(\ref{num12})}. The sheaf $q_*p^*\cI_X(2)$ has property $(C_{n+1})$.
\end{proposition}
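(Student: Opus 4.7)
My plan is to reduce the cohomology of $\cF := q_*p^*\cI_X(2)$ on $\bP^m$ to cohomology of line bundles on $\bP^r$ and on $X$, by combining the Leray spectral sequences for $q$ and $p$ with the ideal sequence $0\to\cI_X\to\cO_{\bP^r}\to\cO_X\to 0$.

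First I would show that $R^kq_*p^*\cI_X(2)=0$ for every $k\ge 1$. The case $k=1$ is the hypothesis (\ref{num12})(ii). For $k\ge 2$, apply $Rq_*$ to $0\to p^*\cI_X(2)\to\cO_P(2)\to p^*\cO_X(2)\to 0$: the rightmost term has vanishing higher direct images because $q$ restricted to the transform of $X$ is the finite map $f$, and the middle term has vanishing higher direct images because $q$ is a $\bP^{r-m}$-bundle on which $p^*\cO_{\bP^r}(1)$ restricts fiberwise to $\cO(1)$, so $H^k(\bP^{r-m},\cO(2))=0$ for $k\ge 1$. The Leray spectral sequence and projection formula then yield
\begin{equation*}
H^i(\bP^m,\cF(j))\;=\;H^i\bigl(P,p^*\cI_X(2)\otimes q^*\cO_{\bP^m}(j)\bigr)\quad\text{for all }i,j.
\end{equation*}

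Next, I would twist the pullback of the ideal sequence with $p^*\cO_{\bP^r}(2)\otimes q^*\cO_{\bP^m}(j)$ and take cohomology on $P$. The $p^*\cO_X$-term contributes $H^i(X,\cO_X(2+j))$, because $\tilde X$ is disjoint from the exceptional divisor $E$ and $f^*\cO_{\bP^m}(1)=\cO_X(1)$. For the $\cO_P(2)$-term I use the identity $q^*\cO_{\bP^m}(1)=p^*\cO_{\bP^r}(1)\otimes\cO_P(-E)$ to rewrite $\cO_P(2)\otimes q^*\cO_{\bP^m}(j)=p^*\cO_{\bP^r}(2+j)\otimes\cO_P(-jE)$ and then apply Leray for $p$. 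The key technical input is
\begin{equation*}
Rp_*\cO_P(kE)=\cO_{\bP^r}\quad\text{(concentrated in degree $0$)},\quad 0\le k\le m,
\end{equation*}
which follows by induction on $k$ from $0\to\cO_P((k-1)E)\to\cO_P(kE)\to\cO_E(-k)\to 0$ together with the vanishing of $R(p|_E)_*\cO_E(-k)$ for $1\le k\le m$ (since $E\to L$ is a $\bP^m$-bundle). For $-m\le j\le 0$ this produces the long exact sequence
\begin{equation*}
\cdots\to H^{i-1}(X,\cO_X(2+j))\to H^i(\bP^m,\cF(j))\to H^i(\bP^r,\cO(2+j))\to H^i(X,\cO_X(2+j))\to\cdots
\end{equation*}

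Finally, I would verify $(C_{n+1})$ case by case. For (i) with $i=0$, $j=-1$: non-degeneracy makes $H^0(\bP^r,\cO(1))\to H^0(X,\cO_X(1))$ injective, killing $H^0(\bP^m,\cF(-1))$. For $i=1$, $j=-2$: irreducibility of $X$ makes $H^0(\bP^r,\cO)\to H^0(X,\cO_X)$ an isomorphism, and $H^1(\bP^r,\cO)=0$. For $2\le i\le n$, $j=-i-1$: the ambient term $H^i(\bP^r,\cO(1-i))$ vanishes trivially and $H^{i-1}(X,\cO_X(1-i))=0$ by Kodaira. For (ii) with $i\ge n+2$, $j=-i$: the ambient term vanishes by degree reasons and $H^{i-1}(X,\cO_X(2-i))=0$ since $i-1>\dim X$. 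The heart of the argument is the identification step; once $Rp_*\cO_P(kE)=\cO_{\bP^r}$ is in hand, the verification of $(C_{n+1})$ reduces to Kodaira vanishing, non-degeneracy, and dimension counts.
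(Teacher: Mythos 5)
Your proof is correct, and it reaches the same endpoint as the paper by a differently organized computation. The paper stays on $\bP^m$: it pushes the ideal sequence down to $0\to q_*p^*\cI_X(2)\to q_*p^*\cO_{\bP^r}(2)\to f_*\cO_X(2)\to 0$ (whose exactness on the right already uses hypothesis (\ref{num12})(ii), just as your step 1 does), handles the ambient term via the explicit splitting $q_*p^*\cO_{\bP^r}(2)\cong S^2\big(\cO_{\bP^m}(1)\oplus(r-m)\cO_{\bP^m}\big)$, and for part (i) simply cites the conditions $(*)$ in the proof of Lazarsfeld's Lemma 2.1. You instead lift everything to the blowup $P$ and push the ambient term down to $\bP^r$, with $Rp_*\cO_P(kE)=\cO_{\bP^r}$ for $0\le k\le m$ as the key lemma in place of the symmetric-power splitting; the two computations agree (the cohomology of the split bundle twisted by $\cO_{\bP^m}(j)$ is exactly $H^\bullet(\bP^r,\cO(2+j))$ in the range $-m\le j\le 0$), but your version is self-contained, treats (i) and (ii) uniformly, and makes transparent that the only geometric inputs are non-degeneracy, connectedness, Kodaira vanishing and $\dim X=n$. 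One boundary point worth stating explicitly: your long exact sequence is only available for $j\ge -m$, so the case $i=n$, $j=-n-1$ of condition (i) needs $m\ge n+1$; this holds in every application (the Main Theorem has $m\ge m_0\ge n+1$, and the degenerate case is $m=r>n$), but it is an honest restriction your route adds to the bare setup of (\ref{num12}).
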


\begin{proof}
The vanishing conditions (i) have essentially been demonstrated by Lazarsfeld \cite{Laz1} as the conditions $(*)$ in the proof
of his Lemma 2.1. He reduces them to $H^0\cI_X(1)=0$ ($X$ is not contained in a hyperplane), $H^1\cI_X=0$ (as $X$ is reduced 
and irreducible) and $H^i\cO_X(-i)=0$ for $1<i<n$ (a consequence of Kodaira vanishing).

Regarding the conditions (ii), we tensor the short exact sequence
\begin{equation*}
0\to q_*p^*\cI_X(2)\to q_*p^*\cO_{\bP^r}(2)\to f_*\cO_X(2)\to 0
\end{equation*}
with $\cO_{\bP^m}(-i)$ and 
find an exact sequence
\begin{equation*}
H^{i-1}f_*\cO_X(2-i)\to H^i \big(q_*p^*(\cI_X(2))\otimes\cO_{\bP^m}(-i)\big) 
\to H^i \big(q_*p^*(\cO_{\bP^r}(2))\otimes\cO_{\bP^m}(-i)\big).
\end{equation*}
As $i-1>n=\dim(X)$, the first term is zero. The third term also vanishes, as
\begin{align*}
q_*p^*(\cO_{\bP^r}(2)) & \cong S^2\big(\cO_{\bP^m}(1)\oplus(r-m)\cO_{\bP^m}\big)\\
& \cong\cO_{\bP^m}(2) \oplus(r-m)\cO_{\bP^m}(1)\oplus \textstyle\binom{r+1-m}{2}\cO_{\bP^m}.
\qedhere
\end{align*}
\end{proof}

\begin{proposition}\label{prop14}
Let $\cF$ be a coherent sheaf on $\bP^r$. If $\cF$ has property $(C_k)$\textup{,} then
\begin{equation*}
H^i\big(\cF(-k)\otimes\Omega^j_{\bP^r}(j)\big)=0
\end{equation*}
for $i\ne k$, any $j$.
\end{proposition}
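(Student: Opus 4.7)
My plan is to apply the Koszul-type resolution of $\Omega^j_{\bP^r}(j)$ obtained as the $j$-th exterior power of the Euler sequence $0 \to \Omega^1(1) \to V^* \otimes \cO \to \cO(1) \to 0$:
$$0 \to \Omega^j(j) \to \wedge^j V^* \otimes \cO \to \wedge^{j-1} V^* \otimes \cO(1) \to \dotsb \to \cO(j) \to 0.$$
Since each term is locally free, tensoring by $\cF(-k)$ remains exact and produces
$$0 \to \cF(-k) \otimes \Omega^j(j) \to B_0 \to B_1 \to \dotsb \to B_j \to 0, \qquad B_s = \wedge^{j-s} V^* \otimes \cF(s-k).$$
The associated hypercohomology spectral sequence
$$E_1^{s,q} = \wedge^{j-s} V^* \otimes H^q\cF(s-k) \;\Longrightarrow\; H^{s+q}\bigl(\cF(-k) \otimes \Omega^j(j)\bigr)$$
is the central computational device.

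Condition $(C_k)(\text{i})$ forces $H^q\cF(s-k) = 0$ precisely when $s - k = -q - 1$ with $q < k$; this kills every entry in the spectral-sequence range along the anti-diagonal $s + q = k - 1$, yielding $H^{k-1}(\cF(-k) \otimes \Omega^j(j)) = 0$ with no further input. For the case $i > k$, I would run the parallel argument using the dual resolution
$$0 \to \cO(j-r-1) \to V \otimes \cO(j-r) \to \dotsb \to \wedge^{r-j} V \otimes \cO(-1) \to \Omega^j(j) \to 0$$
obtained from the $(r-j)$-th exterior power of the dual Euler sequence $0 \to \cO \to V \otimes \cO(1) \to T \to 0$; now $(C_k)(\text{ii})$ wipes out the anti-diagonal corresponding to $i = k + 1$, giving $H^{k+1}(\cF(-k) \otimes \Omega^j(j)) = 0$.

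The remaining rows with $|i - k| \ge 2$ I would handle by induction on $r$ via the hyperplane section sequence $0 \to \cF(-1) \to \cF \to \cF|_H \to 0$, the key claim being that $\cF|_H$ on $\bP^{r-1}$ satisfies a suitable shifted version of $(C_k)$, obtained by chasing the long exact sequences of the restriction together with both parts of $(C_k)$ for $\cF$. The extreme rows $i = 0$ and $i = r$ follow respectively from the saturation of $\Gamma_*\cF$ (so $H^0\cF(-1) = 0$ from $(C_k)(\text{i})$ forces $H^0\cF(-l) = 0$ for all $l \ge 1$) and from the surjectivity $H^r\cF(l-1) \twoheadrightarrow H^r\cF(l)$ (propagating the vanishing $H^r\cF(-r) = 0$ from $(C_k)(\text{ii})$ upward, when $r > k$). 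The main obstacle is the bookkeeping: tracking exactly which shifted version of $(C_k)$ is inherited by $\cF|_H$ under hyperplane restriction, and ensuring the induction closes cleanly across the boundary row $i = k$ where property $(C_k)$ is deliberately silent.
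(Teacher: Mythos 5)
Your two Euler/Koszul resolutions of $\Omega^j(j)$ and the associated hypercohomology spectral sequences are exactly the paper's device, and your direct arguments for $i=k-1$ and $i=k+1$ are correct. The gap is in how you propose to finish. What is actually needed for the remaining values of $i$ is a propagation lemma, which the paper states as a preliminary claim: condition (i) of $(C_k)$ alone implies $H^q\cF(j)=0$ for all $q<k$ and $j\le -q-1$, and condition (ii) alone implies $H^q\cF(j)=0$ for all $q>k$ and $j\ge -q$ (each half is a Mumford-regularity-type argument by induction on the twist and on $r$, using a hyperplane avoiding the associated points of $\cF$). With this in hand, your first spectral sequence already kills every $i<k$ at once --- on the anti-diagonal $s+q=i$ the entries are $H^q\cF(i-q-k)$ with $q\le i<k$ and $i-q-k\le -q-1$ --- and the second kills every $i>k$; no case distinction $|i-k|\ge 2$, no special treatment of $i=0$ or $i=r$, and no induction on $r$ for the statement about $\Omega^j$ are required.

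As written, your plan for $|i-k|\ge 2$ would not close: the ``key claim'' that $\cF|_H$ satisfies a shifted version of $(C_k)$ is false for any single shift. Chasing the long exact sequence of $0\to\cF(-1)\to\cF\to\cF|_H\to 0$ gives $H^i\big(\cF|_H(-i-1)\big)=0$ only for $i<k-1$, while $H^i\big(\cF|_H(-i)\big)=0$ holds for $i>k$, so $\cF|_H$ satisfies neither $(C_{k-1})$ nor $(C_k)$. This is harmless precisely because the two halves of the hypothesis and of the conclusion never interact --- they must be propagated independently, as above --- but it means an induction organized around a single inherited property $(C_{k'})$ breaks down. An induction on $r$ for the full statement would in addition have to cope with the fact that $\Omega^j_{\bP^r}(j)\otimes\cO_H$ is not $\Omega^j_H(j)$ but an extension of it by $\Omega^{j-1}_H(j-1)$, which your sketch does not address.
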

\begin{proof}
We first claim that
\begin{equation*}
H^i\cF(j)=0 \qquad\qquad \text{for $i<k$, $j\le -i-1$ and for $i>k$, $j\ge -i$.}
\end{equation*}

This can be established with similar arguments as in the proof of \cite[1.8.3(iii)]{Laz2}, using the
canonical Koszul complex on $\bP^r=\bP(V)$:
\begin{equation}\tag{$K_\bullet$}
0\to \wedge^{r+1}V_\bP(-r-1)\to\dotsc\to\wedge^2V_\bP(-2)\to V_\bP(-1)\to\cO_{\bP^r}\to 0
\end{equation}
where $V_\bP=V\otimes\cO_{\bP^r}$.

To establish the desired vanishing, one tensors the truncated sequences
\begin{gather}\tag{$K'_\bullet$}
0\to\Omega^j\to\wedge^j V_\bP(-j)\to \wedge^{j-1} V_\bP(-j+1)\to\dotsc\\
\intertext{(for $i<k$) resp.}
\tag{$K''_\bullet$}
\dotsc \to \wedge^{j+2} V_\bP(-j-2)\to \wedge^{j+1} V_\bP(-j-1)\to \Omega^j\to 0.
\end{gather}
(for $i>k$) with $\cF(j-k)$ and argues similarly. We leave the details to the reader.
\end{proof}

We recall the definition of the Beilinson spectral sequence:

The diagonal $\Delta\subset\bP^r\times\bP^r$ is the zero scheme of a section of the bundle
$\cB=pr_1^*\cO_{\bP^r}(1)\otimes pr_2^*\cT_{\bP^r}$, hence $\cO_\Delta$ is resolved by a Koszul complex
\begin{equation*}\tag{$B_\bullet$}
\dotsc\wedge^2(\cB^\vee)\to\cB^\vee\to\cO_{\bP^r\times\bP^r}\to\cO_\Delta\to 0
\end{equation*}

The Beilinson spectral sequence for a sheaf $\cF$ is the second quadrant spectral sequence associated with the truncated
complex $(B_\bullet^+)$ (which results from $(B_\bullet)$ by replacing $\cO_\Delta$ with $0$) tensored with $pr_2^*\cF$, 
when applying $pr_{1,*}$ \cite[B.1.5]{Laz2}:
{\small
\begin{equation*}
E_1^{pq}= (R^q pr_{1,*})\big(pr_2^*(\cF\otimes \Omega^{-p}(-p))\big)\otimes\cO_{\bP^r}(p) \Longrightarrow 
\begin{cases}
pr_{1,*}\big((pr_2^*\cF)\otimes\cO_\Delta\big)\cong\cF & p+q=0 \\
0 & \text{else.}
\end{cases}
\end{equation*}
}

\begin{theorem}\label{thm15}
Suppose that the coherent sheaf $\cF$ satisfies $(C_k)$. The Beilinson spectral sequence for $\cF(-k)$ provides
a complex
\begin{equation*}
0\to a_r\cO(-r)\overset{\partial_r}\to a_{r-1}\cO(-r+1)\overset{\partial_{r-1}}\to \dotsc \to a_1\cO(-1)\to a_0\cO\to 0
\end{equation*}
whose only non-vanishing cohomology sheaf occurs at the position $a_k\cO(-k)$ and is isomorphic to $\cF(-k)$.
Setting $\cG=\Img(\partial_{k+1})\otimes\cO(k+1)$\textup{,} $\cE=\Ker(\partial_k)\otimes\cO(k)$\textup{,} 
there is an exact sequence
\begin{equation*}\tag{$*$}
0\to\cG(-k-1)\to\cE(-k)\to\cF(-k)\to 0.
\end{equation*}
The sheaf $\cG$ is $0$-regular\textup{,} and the sheaf $\cE$ is locally free and is part of an exact sequence
\begin{equation*}\tag{$**$}
0\to \cE(-k)\to a_k\cO(-k)\to a_{k-1}\cO(1-k)\to \dotsc \to a_1\cO(-1)\to a_0\cO\to 0.
\end{equation*}
\end{theorem}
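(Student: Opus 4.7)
The plan is to run the Beilinson spectral sequence on $\cF(-k)$ and read off every assertion from its degeneration. The $E_1$-page is
\[
E_1^{pq}=H^q\bigl(\cF(-k)\otimes\Omega^{-p}(-p)\bigr)\otimes\cO_{\bP^r}(p),\qquad -r\le p\le 0,
\]
and Proposition \ref{prop14}, applied with $j=-p$, kills every such group for $q\ne k$. Only the row $q=k$ survives; all higher differentials $d_j$ with $j\ge 2$ then vanish for degree reasons, so $E_2=E_\infty$. Setting $a_{-p}=h^k\bigl(\cF(-k)\otimes\Omega^{-p}(-p)\bigr)$, the row $q=k$ is a complex
\[
0\to a_r\cO(-r)\to a_{r-1}\cO(-r+1)\to\dotsc\to a_0\cO\to 0,
\]
and convergence to $\cF(-k)$ in total degree $0$ forces its cohomology to be $\cF(-k)$ at $p=-k$ and zero elsewhere.

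To extract $(**)$ and $(*)$ I would split this complex at the position of the nontrivial cohomology. The subcomplex in positions $p\ge -k$,
\[
0\to\cE(-k)\to a_k\cO(-k)\to a_{k-1}\cO(1-k)\to\dotsc\to a_0\cO\to 0,
\]
is exact by the previous step; breaking it into short exact sequences from the right and using iteratively that the kernel of a surjection between locally free sheaves is locally free shows that $\cE(-k)$, hence $\cE$, is locally free. Symmetrically, the subcomplex in positions $p\le -k-1$ is, by the definition $\cG=\Img(\partial_{k+1})\otimes\cO(k+1)$, a resolution
\[
0\to a_r\cO(-r)\to\dotsc\to a_{k+1}\cO(-k-1)\to\cG(-k-1)\to 0.
\]
Twisting by $\cO(k+1)$, the $i$-th term becomes $a_{k+1+i}\cO(-i)$, which is $i$-regular, and the standard resolution estimate for regularity then yields $\reg(\cG)\le 0$. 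Finally, $(*)$ is nothing but the identification
\[
\cE(-k)/\cG(-k-1)=\Ker(\partial_k)/\Img(\partial_{k+1})\cong\cF(-k).
\]

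The most delicate step is the bookkeeping around the Beilinson spectral sequence itself: one has to verify that the $d_1$-differentials really are $\cO_{\bP^r}$-linear maps between the twists $a_{-p}\cO(p)$, and that the identification $E_\infty^{-k,k}\cong\cF(-k)$ is the one producing the natural quotient map $\cE(-k)\twoheadrightarrow\cF(-k)$ in $(*)$. Both facts are standard consequences of the Beilinson construction recalled before the theorem, so I do not expect a genuine obstacle; once Proposition \ref{prop14} is in hand, the theorem is essentially a formal consequence of the collapse of the spectral sequence together with elementary short-exact-sequence manipulations.
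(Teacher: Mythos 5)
Your proposal is correct and follows essentially the same route as the paper: both collapse the Beilinson spectral sequence for $\cF(-k)$ to the single row $q=k$ via Proposition \ref{prop14} and then read off $(*)$, $(**)$, the local freeness of $\cE$, and the $0$-regularity of $\cG$ from the resulting complex. You merely spell out the short-exact-sequence bookkeeping that the paper dismisses with ``the remaining statements follow immediately,'' and you do so correctly.
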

\begin{proof}
Using proposition \ref{prop13}, we compute
\begin{align*}
E_1^{pq} &= H^q\big(\cF(-k)\otimes\Omega^{-p}(-p)\big)\otimes\cO_{\bP^r}(-p) \\
&=
\begin{cases}
H^k\big(\cF(-k)\otimes\Omega^{-p}(-p)\big)\otimes\cO_{\bP^r}(-p) & \text{for $q=k$}\\
0 & \text{for $q\ne k$}
\end{cases}
\end{align*}
Hence $E_1^{pq}$ has nonzero entries only in the row $q=k$, and this row together 
with the corresponding differentials yields the complex of the theorem. The 
remaining statements of the theorem follow immediately.
\end{proof}

Lazarsfeld's approach in \cite{Laz1} implicitly uses the special case $r=k=3$, where $\cG=0$ and $\cE=\cF=q_*p^*\cI_S(2)$.

For future reference, we denote $a_i=\dim H^k\big(\cF(-k)\otimes\Omega^i(i)\big)$.

\begin{remark}\label{rmk16}
For an $n$-dimensional smooth complex manifold $X$, we set $\cF=\cI_X(2)$ and obtain a ``canonical'' locally free resolution of $\cI_X$:
\begin{equation}\label{eq16a}\tag{$R_\bullet$}
0\to a_r\cO(-r)\to \dotsc \to a_{n+2}\cO(-n-2)\to \cE(-n-1)\to \cI_X(1-n)\to 0.
\end{equation}
Its dual provides a ``canonical'' locally free resolution of $\omega_X$
\begin{equation*}
0\to \cO(n-1)\to\cE^\vee(n+1) \to a_{n+2}\cO(n+2)\to\dotsc\to
a_r\cO(r)\to\omega_X(r+n)\to 0.
\end{equation*}

The last sequence shows that $\omega_X(n)$ has a resolution with $r-n-1$ 
linear steps in the given embedding. In particular, $\omega_X(n)$ is globally 
generated for any proper nondegenerate projective submanifold $X\subset\bP^r$ (i.e.,
excluding $(X,\cO_X(1)=(\bP^n,\cO(1)$).
This recovers a result of Ein \cite{Ein} which is used in Ein and Lazarsfeld's 
investigation of syzygies of smooth projective varieties of 
arbitrary dimension \cite[p.\,59]{EL}.
\end{remark}

\begin{corollary}\label{cor17}
Assume that $\cF$ satisfies $(C_k)$\textup{,} and let $\cE$ be the vector bundle associated 
with $\cF$ in the previous theorem. Then we have
\begin{equation*}
\reg(\cF)=\reg(\cE) \le -c_1(\cE) =-\sum_{i=1}^k (-1)^i i a_{k-i}= \sum_{j=0}^{k-1} (-1)^j \binom{r-1}{k-1-j}b_{j-k}
\end{equation*}
where $b_i=(-1)^k \chi\big(\cF(i)\big)$.
\end{corollary}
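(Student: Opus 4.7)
The plan splits the proof into three parts matching the three claims: the equality $\reg(\cF) = \reg(\cE)$, the estimate $\reg(\cE) \le -c_1(\cE)$, and the two explicit formulas for $-c_1(\cE)$.

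For the equality $\reg(\cF) = \reg(\cE)$, I would apply the standard Mumford-type regularity inequalities to the short exact sequence $(*)$: $0 \to \cG(-k-1) \to \cE(-k) \to \cF(-k) \to 0$. Since $\cG$ is $0$-regular by Theorem \ref{thm15}, the shift $\cG(-k-1)$ is $(k+1)$-regular; the SES inequalities pin down $\reg(\cE(-k)) = \reg(\cF(-k))$ in the nontrivial range, giving the equality.

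For the bound $\reg(\cE) \le -c_1(\cE)$, I would dualize $(**)$ to obtain the resolution
\[0 \to a_0 \cO \to a_1 \cO(1) \to \dots \to a_k \cO(k) \to \cE^\vee(k) \to 0\]
of $\cE^\vee(k)$ by line bundles with positive twists, from which iterated regularity bounds yield $\cE^\vee$ being $0$-regular (as asserted in the excerpt). Passing from this to a bound on $\reg(\cE)$ I would handle through Serre duality: $\cE$ is $m$-regular iff $H^j(\cE^\vee(-m-j-1)) = 0$ for all $0 \le j < r$. Using the dualized resolution to reduce these cohomologies to those of the terms $a_i \cO(i)$ after twisting by $-m - j - 1 - k$, the required vanishings at $m = -c_1(\cE)$ follow by tracking the Bott-type vanishing of $H^q(\cO(l))$ on $\bP^r$ across the hypercohomology spectral sequence.

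For the explicit formulas: the middle expression $-c_1(\cE) = -\sum_{i=1}^k (-1)^i i\, a_{k-i}$ follows from the additivity of $c_1$ in the exact sequence $(**)$ combined with $\rk(\cE) = \sum_{i=0}^k (-1)^{k-i} a_i$. For the rightmost formula, Proposition \ref{prop14} gives $a_i = (-1)^k \chi(\cF(-k) \otimes \Omega^i(i))$; the Koszul resolution of $\Omega^i(i)$ (the truncation $(K'_\bullet)$ from the proof of Proposition \ref{prop14}) yields $a_i = \sum_{l=0}^i (-1)^l \binom{r+1}{i-l}\, b_{l-k}$; substituting into the middle formula, swapping the order of summation, and applying the binomial identity $(N+1)\binom{r}{N} - (r+1)\binom{r-1}{N-1} = \binom{r-1}{N}$ collapses the double sum to the desired expression.

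The main obstacle is the second part. While the existence of the dualized resolution quickly yields that $\cE^\vee$ is $0$-regular, extracting the sharp bound $\reg(\cE) \le -c_1(\cE)$ requires more than a crude iteration of SES inequalities, since the latter gives only much weaker bounds (roughly $\reg(\cE) \le k$). One must exploit the specific linear structure of the dual resolution and the Bott combinatorics on $\bP^r$ to obtain a bound in terms of $c_1$.
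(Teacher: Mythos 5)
Your treatment of the first equality, of the computation of $c_1(\cE)$ from $(**)$, and of the Euler-characteristic/Koszul manipulation leading to the final binomial expression all match the paper's proof (the paper packages the last collapse as a generating-function identity, coefficient of $t^l$ in $-t(1+t)^{r-1}$, rather than your binomial identity, but these are equivalent). The genuine gap is exactly where you locate it: the step $\reg(\cE)\le -c_1(\cE)$. Your proposed mechanism --- Serre duality plus the dualized resolution plus ``Bott-type vanishing across the hypercohomology spectral sequence'' --- does not close. Writing the condition as $H^j\big(\cE^\vee(-m-j-1)\big)=0$ for $0\le j<r$ and resolving $\cE^\vee$ by the twisted trivial bundles $a_i\cO(i)$, the relevant $E_1$-terms are $H^{j+i}\big(\cO(-m-j-1-i)\big)$; for $j+i=r$ these are the groups $H^r\big(\cO(-m-r-1)\big)$, which are \emph{nonzero} for every $m\ge 0$. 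So the spectral sequence does not force the required vanishing for any $m$, and no amount of Bott combinatorics on the line-bundle terms alone will produce the bound $-c_1(\cE)$; the information that $\cE$ is a subbundle of a trivial bundle enters only through $c_1$, which this argument never sees.

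The missing idea in the paper is different in kind: from the $0$-regularity of $\cE^\vee$ one invokes Lazarsfeld's lemma \cite[2.7]{Laz1} that all exterior powers $\wedge^j\cE^\vee$ are again $0$-regular, and then uses the canonical isomorphism $\cE\cong\wedge^{m-1}\cE^\vee\otimes\det(\cE)$ (where $m=\rk\cE$), coming from the perfect pairing $\cE\otimes\wedge^{m-1}\cE\to\det(\cE)$. Thus $\cE$ is a twist of the $0$-regular bundle $\wedge^{m-1}\cE^\vee$ by the line bundle $\det(\cE)=\cO(c_1\cE)$ with $c_1\cE\le 0$, and is therefore $(-c_1\cE)$-regular. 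You would need to supply this (or an equivalent multiplicative argument); as written, your second step would fail.
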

\begin{proof}
The first equality follows from the $0$-regularity of the sheaf $\cG$ in $(*)$. 

Regarding the inequality, we first of all note that the dual sequence to $(**)$ shows that $\cE^\vee$ is $0$-regular. According
to Lazarsfeld \cite[2.7]{Laz1}, $\wedge^j\cE^\vee$ is then also $0$-regular for any $j$. We now use that if $\cE$ has rank $m$, 
then the non-degenerate pairing $\cE\otimes\wedge^{m-1}\cE\to\det(\cE)$ implies that $\cE\cong\wedge^{m-1}\cE^\vee\otimes\det(\cE)$,
hence $\cE$ is $l$-regular with $l=-c_1\det(\cE)=-c_1\cE$.

$c_1(\cE)$ is computed from the exact sequence $(**)$.

Regarding the last equality, we first note that
\begin{equation*}
a_i=\dim H^k\big(\cF(-k)\otimes\Omega^i(i)\big)=(-1)^k \chi\big(\cF(-k)\otimes\Omega^i(i)\big)
\end{equation*}
as all other cohomology groups of $\cF(-k)\otimes\Omega^i(i)$ vanish by (\ref{prop13}).

We now tensor the complex
\begin{equation*}\tag{$K'_\bullet$}
0\to\Omega^i\to\wedge^i V_\bP(-i)\to\wedge^{i-1}V_\bP(-i+1)\to\dotsc\to 
V_\bP(-1)\to\cO_{\bP^r}\to 0
\end{equation*}
with $\cF(i-k)$, take Euler characteristics and find
\begin{equation*}
\chi\big(\cF(-k)\otimes\Omega^i(i)\big) = \sum_{j=0}^i (-1)^j \binom{r+1}{i-j} \chi\big(\cF(j-k)\big).
\end{equation*}

Therefore
\begin{align*}
c_1(\cE) & = \sum_{i=1}^k (-1)^i i a_{k-i} \\
& = \sum_{i=1}^k \sum_{j=0}^{k-i}(-1)^{i+j} i \binom{r+1}{k-i-j} b_{j-k} \\
&= \sum_{j=0}^{k-1} (-1)^j \sum_{i=1}^{k-j} (-1)^i i \binom{r+1}{k-i-j} b_{j-k}
\end{align*}

Setting $l=k-j$, the coefficient of $b_{j-k}$ is
\begin{equation*}
(-1)^j \sum_{i=1}^l (-1)^i i \binom{r+1}{l-i}
\end{equation*}
which can be viewed as the coefficient of $t^l$ in the product of the formal power series
\begin{equation*}
f(t)=\sum_{i=0}^\infty (-1)^i i t^i=t\cdot\frac{d}{dt} \Big(\sum_{i=0}^\infty (-1)^i t^i \Big)
=t\cdot\frac{d}{dt} \Big(\frac{1}{1+t}\Big)
=-\frac{t}{(1+t)^2}
\end{equation*}
and
\begin{equation*}
g(t)=(-1)^j (1+t)^{r+1},
\end{equation*}
hence agrees with the coefficient of $t^l$ in $(f\cdot g)(t)=(-1)^{j+1} t(1+t)^{r-1}$ as claimed.
\end{proof}

\begin{proof}[Proof of the Main Theorem]
We consider the situation described in (\ref{num12}): The sheaf $q_*p^*\cI_X(2)$ has property $(C_{n+1})$, and 
corollary (\ref{cor17}) provides a bound for its regularity in terms of the Euler characteristics
$\chi\big(q_*p^*\cI_X(2)\otimes\cO_{\bP^m}(i-2)\big)$ for $i=-n+1,\dotsc,0,1$.

Setting $b'_i= \chi(\cO_X(i))$, and considering $b_i=(-1)^{n+1}\chi\big(q_*p^*\cI_X(2)\otimes\cO_{\bP^m}(i-2)\big)$,
the short exact sequence
\begin{equation*}
0\to q_*p^*\cI_X(2)\otimes\cO_{\bP^m}(j-2)\to q_*p^*\cO_{\bP^r}(2)\otimes\cO_{\bP^m}(j-2)\to f_*\cO_X(j)\to 0
\end{equation*}
shows that
\begin{equation*}
(-1)^{n+1} b_j+ b'_j=\chi\big(q_*p^*\cO_{\bP^r}(2)\otimes\cO_{\bP^m}(j-2)\big)=
\begin{cases}
r+1  & \text{for $j=1$}\\
1 & \text{for $j=0$} \\
0 & \text{for $-m\le j<0$}
\end{cases}
\end{equation*}

Hence \cite[1.5]{Laz1} (for the first inequality) together with (\ref{cor17}) (for the second inequality) imply that
\begin{align*}
\reg(\cI_X(2)) &\le \reg\big(q_*p^*\cI_X(2)\big)\le \sum_{j=0}^n (-1)^j \binom{m-1}{n-j}b_{j-n+1} \\
&= (m-1) -(r+1) +\sum_{j=0}^n (-1)^{n+j+1} \binom{m-1}{n-j}b'_{j-n+1}.
\end{align*}
Regarding the range for $m$, we refer to (\ref{num19}) below.
\end{proof}

\begin{examples}
$X\subset\bP^r$ denotes a non-degenerate smooth irreducible complex manifold. We parametrize the Hilbert polynomial 
of $X$ as follows:
\begin{equation*}
\chi(\cO_X(z))=\sum_{j=0}^n c_j\binom{z+j-1}{j}.
\end{equation*}
We note that $c_n=\deg X$ and $c_j(X)=\chi(\cO_{X\cap H_j})$ where $H_j$ is a 
general linear space of codimension $j$.

We further assume that $X$ can be projected linearly into $\bP^m$ as described in (\ref{num12}).
\begin{enumerate}
\item[1.] Let $X$ be a curve of degree $d$ and genus $g$. This implies $c_1=d$, $c_0=1-g$, and we calculate
\begin{equation*}
\reg(X)\le d+2+(m-2)g-r
\end{equation*}
\item[2.] Let $X$ be a surface of degree $d$, sectional genus $\pi$ and Euler characteristic $\chi$. This implies
$c_2=d$, $c_1=1-\pi$, $c_0=\chi$, and
\begin{equation*}
\reg(X)\le d+m(m-3)/2\cdot (\pi-1)-(m-2)(m-3)/2\cdot \chi -(r-m)
\end{equation*}
\item[3.] Let $X$ be ruled over a curve of genus $g$, embedded as a scroll with degree $d$. As a general intersection with 
a linear space is again a scroll over a curve of genus $g$,  we find $c_n=d$, $c_{n-1}=\dotsc=c_0=1-g$, and
\begin{equation*}
\reg(X)\le d+(m-1-n)g+n-r+1
\end{equation*}
\end{enumerate}
\end{examples}
\begin{proof}
\begin{enumerate}
\item[1.] Curves: $c_1=d$, $c_0=1-g$ imply $b'_1=c_1+c_0$, $b'_0=c_0$. Therefore
$-c_1(\cE)=(m-r)-2-\binom{m-1}{1}b'_0+\binom{m-1}{0}b'_1=%
c_1-(m-2)c_0-(r-m)-2=d+(m-2)(g-1)-(r-m)-2%
=d+(m-2)g-m+2-r+m-2=d+(m-2)g-r$.
\item[2.] Surfaces: $c_2=d$, $c_1=1-\pi$, $c_0=\chi$ imply $b'_1=c_0+c_1+c_2$, $b'_0=c_0$, $b'_{-1}=-c_1+c_0$.
Therefore
$-c_1(\cE)=m-r-2+\binom{m-1}{2}b'_{-1}-\binom{m-1}{1}b'_0+\binom{m-1}{0}b'_1
=c_2+\big(-\binom{m-1}{2}+\binom{m-1}{0}\big)c_1+\big(\binom{m-1}{2}-\binom{m-1}{1}+\binom{m-1}{0}\big)c_0-(r-m)-2
=d+m(m-3)/2\cdot(\pi-1)-(m-2)(m-3)/2\cdot\chi-(r-m)-2$.
\item[3.] Scrolls over a curve: $c_n=d$, $c_{n-1}=\dotsc=c_0=1-g$ imply that
$\chi\cO_X(z)=d\binom{z+n-1}{n}+\sum_{j=0}^{n-1}(1-g)\binom{z+j-1}{j}
=d\binom{z+n-1}{n}+(1-g)\binom{z+n-1}{n-1}$. Accordingly, $b'_j=\chi\cO_X(z)$ vanishes for $-1\ge z\ge 1-n$,
$b'_1=d+n(1-g)$ and $b'_0=1-g$. This leads to
$-c_1(\cE)=(m-r)-2-\binom{m-1}{1}b'_0+\binom{m-1}{0}b'_1=
d+n(1-g)-(m-1)(1-g)-(r-m)-2=
d+(m-1-n)(g-1)-(r-m)-2=
d+(m-1-n)g+n-r-1$. \qedhere
\end{enumerate}
\end{proof}

\begin{num}\label{num19}
Consider the situation described in (\ref{num12}), i.e, $X\subset\bP^r$ is an irreducible nondegenerate projective manifold,
$f\colon X\to\bP^m$ the projection from a general linear subspace $L$.

Ran showed \cite[5.6]{Ran2} that the locus of fibres of $f$ of length $k$ or more has codimension at least $k(m-n)$ in $\bP^m$
under either of the following conditions:
\begin{enumerate}
\item[(1)] $\dim L\le 1$,
\item[(2)] $\dim L<r-n+\min(2-n/3,0)$.
\end{enumerate}

This implies that a general projection has no fibers of length $\ge 4$, if 
$m>4n/3$ and one of the following conditions holds:
\begin{enumerate}
\item[(i)] $m=r-1$;
\item[(ii)] $m=r-2$; 
\item[(iii)] $m>2n-r+\max(n/3-2,0)$.
\end{enumerate}

If all fibers of the projection have length at most $3$, then $R^1q_*p^*\cI_X(2)$ vanishes, 
and the projection approach can be applied to bound the regularity of $X$. 

Theorem A only uses the special case of a projection from $\bP^{2n+1}$ into $\bP^{2n-1}$. 
Given $X\subset\bP^r$ and $r\ge 2n+1$, $X$ can be first projected isomorphically into $\bP^{2n+1}$, 
and from there into $\bP^{2n-1}$. This corresponds to $r=2n+1$ and case (ii) above.
\end{num}

\begin{num}
Considering the regularity bound as a linear function in the $c_j$, the coefficient of the highest 
power of $z$, $c_n=\deg(X)$, only contributes to the term $\chi(\cO_X(1))$ corresponding to 
$k=n$, hence appears in the bound with multiple $(-1)^{2n}\binom{m-1}{0}=1$.
\end{num}

\section{Geometry of the bundle $\cE$, and related approaches}

\begin{num}[The splitting type of $\cE$]
A vector bundle $\cE$ on $\bP^r$ splits over any line $L$ as a direct sum 
$\cE_L=\cE\otimes\cO_L\cong\oplus_i \cO_L(a_i)$. The numerical type of 
the splitting and the corresponding geometry of the lines of a particular splitting type 
are interesting invariants of $\cE$.

The vector bundles $\cE$ constructed in (\ref{thm15}) are subbundles of a trivial bundle, hence
$a_i\le 0$ for all $i$. In addition $\sum_i a_i=c_1\cE$, thus $a_i=c_1\cE-\sum_{j\ne i}a_j\ge c_1\cE$. 
If there is a line $L$ where the restriction of $\cE$ has a direct summand $\cO(c_1\cE)$, 
then clearly $\reg\cE=-c_1\cE$.

In a geometric situation, we have much stronger restrictions. Our calculation 
did not use any of these, but they should eventually translate into better regularity bounds.

It turns out that the splitting type of a vector bundle $\cE$ derived from the ideal sheaf of a 
projective manifold $X$ reflects the geometry of the multisecant lines of $X$ which
contain at least 4 points (with multiplicity).
\end{num}

\begin{proposition}
Let $\cE$ be the vector bundle corresponding to the ideal sheaf of a projective manifold $X$ 
as in \textup{(\ref{thm15})}\textup{,}
and let $\cE_L=\cE\otimes\cO_L\cong\oplus_i \cO_L(a_i)$ be its splitting type over a line $L$.
\begin{enumerate}
\item[\textup{(i)}]
$(r-n+1)-d\le a_i\le 0$ for all $i$.
\item[\textup{(ii)}] We have
\begin{equation*}
\cI_X\otimes\cO_L\cong
\begin{cases}
N_{X/\bP^r}^\vee\otimes\cO_L & \text{if $L\subset X$} \\
\cO_L(-l) & \text{if $L\not\subset X$ and $l=\len(\cO_{X\cap L})$.}
\end{cases}
\end{equation*}
\item[\textup{(iii)}]
The kernel of the epimorphism $\cE_L\to\cI_X(2)\otimes\cO_L$ is a direct sum
of copies of $\cO_L$ and $\cO_L(-1)$.
The number of terms of $\cO_L(-1)$ in the sum is determined by the requirement
that $\sum_i a_i=c_1\cE$.
\item[\textup{(iv)}] If $L\not\subset X$ and $l=\len(\cO_{X\cap L})\ge 2$\textup{,} then the
epimorphism $\cE_L\to\cI_X(2)\otimes\cO_L=\cO_L(2-l)$ splits.
\end{enumerate}
\end{proposition}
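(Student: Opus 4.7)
The plan is to handle the four parts in the order (ii), (iii), (iv), (i), since each uses the preceding ones.

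\emph{Part (ii)} is a local computation. Tensor the sequence $0\to\cI_X\to\cO_{\bP^r}\to\cO_X\to 0$ with $\cO_L$. When $L\not\subset X$, the intersection $X\cap L$ is a finite subscheme of $L\cong\bP^1$ of length $l$; smoothness of $X$ (so that $\cI_X$ is locally the ideal of a regular sequence of length $r-n$) kills the relevant $\Tor_1$ and identifies $\cI_X\otimes\cO_L$ with $\cI_{X\cap L}=\cO_L(-l)$. When $L\subset X$, smoothness gives the standard identification $\cI_X\otimes\cO_L\cong(\cI_X/\cI_X^2)\otimes_{\cO_X}\cO_L=N^\vee_{X/\bP^r}|_L$; the point is that, locally, $\cI_X$ is freely generated by elements in $\cI_L$, so $\cI_X^2\subset\cI_X\cdot\cI_L$, and the reverse inclusion is automatic.

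\emph{Part (iii)} is the heart of the matter. Restrict the short exact sequence $0\to\cG(-1)\to\cE\to\cI_X(2)\to 0$ to $L$. Right exactness yields a surjection $\cE_L\to\cI_X(2)|_L$ with kernel $K=\Img(\cG(-1)|_L\to\cE_L)$, a vector bundle on $L\cong\bP^1$. I would then bound the summands of $K$ from above and below. The upper bound $\le 0$ comes from the fact that $\cE$ is a subbundle of the trivial bundle $a_{n+1}\cO$: by descending induction on $(**)$, each cokernel in the resolution is locally free, so the inclusion $\cE\hookrightarrow a_{n+1}\cO$ is a subbundle, and restriction preserves this, forcing all summands of $K\subset\cE_L\subset a_{n+1}\cO_L$ to have non-positive degree. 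The lower bound $\ge -1$ uses that $\cG$ is $0$-regular on $\bP^r$ and that Castelnuovo-Mumford regularity is preserved under restriction to linear subspaces, so $\cG|_L$ is $0$-regular on $\bP^1$; hence the torsion-free summands of $\cG(-1)|_L$ have degree $\ge -1$. The kernel of $\cG(-1)|_L\to\cE_L$ is the image of $\Tor_1(\cI_X(2),\cO_L)$, a torsion sheaf, so $K$ is the torsion-free quotient of $\cG(-1)|_L$ and inherits the bound. Combining, $K=a\cO_L\oplus b\cO_L(-1)$, with $a,b$ pinned down by $\rk K$ and $\deg K=c_1(\cE)-\deg(\cI_X(2)|_L)$. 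The main obstacle is the torsion claim for the kernel of $\cG(-1)|_L\to\cE_L$, which is precisely what allows the lower bound to pass from $\cG(-1)|_L$ to its quotient $K$.

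\emph{Part (iv)} then follows. The extension $0\to K\to\cE_L\to\cO_L(2-l)\to 0$ splits iff $\Ext^1(\cO_L(2-l),K)=H^1(L,K(l-2))=0$. By (iii), $K(l-2)=a\cO_L(l-2)\oplus b\cO_L(l-3)$; for $l\ge 2$ both degrees are $\ge -1$, and $H^1(\bP^1,\cO(m))=0$ for $m\ge -1$.

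\emph{Part (i)}: the upper bound $a_i\le 0$ is the same argument as in (iii). For the lower bound, $\sum_i a_i=c_1(\cE_L)=c_1(\cE)$ together with all $a_i\le 0$ force $a_i\ge c_1(\cE)$ for each $i$. It remains to check $c_1(\cE)\ge(r-n+1)-d$, which is a Chern-class bookkeeping via $(*)$: $c_1(\cE)=c_1(\cG(-1))+c_1(\cI_X(2))$, where $c_1(\cI_X)$ is read off from $0\to\cI_X\to\cO\to\cO_X\to 0$ and the class of $X$ in $K^0(\bP^r)$, and $c_1(\cG),\rk\cG$ are controlled by the resolution of $\cG$ by $\cO(j)$ with $j\le -1$ extracted from the left part of $(**)$.
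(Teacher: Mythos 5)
Parts (ii)--(iv) of your plan are essentially sound. For (iii) you restrict the short exact sequence $(*)$ and use the $0$-regularity of $\cG$, where the paper instead restricts the longer resolution $(R_\bullet)$ of (\ref{rmk16}), whose term before $\cE(-n-1)$ is the explicit sum of line bundles $a_{n+2}\cO(-n-2)$; both routes give that the kernel is locally free, generated in degree $1$, and embedded in a trivial bundle. One caveat: your justification via ``$\Tor_1(\cI_X(2),\cO_L)$ is torsion'' is false when $L\subset X$ (locally $\Tor_1(\cI_X,\cO_L)\cong\Tor_2(\cO_X,\cO_L)$ is free of rank $\binom{r-n}{2}$ there), but the claim is also unnecessary: $\cG$ is $0$-regular, hence globally generated, hence so is $\cG\otimes\cO_L$ and so is its quotient $K(1)$; that alone gives the lower bound $-1$ on the summands of $K$.

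The genuine gap is in part (i). Your lower bound rests on $a_i\ge c_1(\cE)$ followed by the hoped-for inequality $c_1(\cE)\ge (r-n+1)-d$, and that inequality is false in general. Indeed $-c_1(\cE)$ is exactly the regularity bound computed in (\ref{cor17}) and in the Examples: already for a curve of genus $g$ one gets $-c_1(\cE)=d+(m-2)g-r$, which strictly exceeds $d-(r-n+1)=d-r$ as soon as $g>0$; for general $X$ the discrepancy is the whole point of the paper (the bound $-c_1(\cE)$ carries the lower coefficients of the Hilbert polynomial, not just $d$). This is precisely the distinction drawn in (2.1): $a_i\ge c_1(\cE)$ is the trivial estimate, and the proposition asserts a ``much stronger restriction'' that cannot be extracted from $(*)$ and $(**)$ by Chern-class bookkeeping. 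The paper's argument is geometric: by Lemma (\ref{lem23}) a general linear subspace $\bP^m\supset L$ with $m=r-n+2$ meets $X$ in a smooth irreducible nondegenerate surface $S$ of the same degree and codimension; the resolution $(R_\bullet)$ stays exact after restriction to $\bP^m$; Lazarsfeld's sharp theorem for surfaces makes $\cI_{S/\bP^m}\bigl(d-(r-n-1)\bigr)$ globally generated; and chasing this through the restricted resolution shows $\cE_L\bigl(d-(r-n+1)\bigr)$ is globally generated, which is exactly $a_i\ge (r-n+1)-d$. Without this input from the surface case your part (i) does not close.
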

\begin{proof}
(i) As $\cE$ is a subbundle of a trivial bundle, we have $a_i\le 0$ for all $i$.
Lemma (\ref{lem23}) shows (via induction) that the intersection of $X$ with a general linear space 
of dimension $m=r-n+2$ containing $L$ will be a smooth irreducible nondegenerate surface 
$S\subset\bP^m$ with the same degree and codimension.

The sequence
\begin{equation}\tag{$R_\bullet$}
0\to a_r\cO(-r)\to \dotsc \to a_{n+2}\cO(-n-2)\to \cE(-n-1)\to \cI_X(1-n)\to 0.
\end{equation}
from (\ref{rmk16}) remains exact after restricting to $\bP^m$. As
$\cI_{S/\bP^m}\big(d-(r-n-1)\big)$ is globally generated by Lazarsfeld's solution
of the surface case, we conclude that $\cE\big(d-(r-n+1)\big)\otimes\cO_{\bP^m}$
is also globally generated, hence the same holds for $\cE_L\big(d-(r-n+1)\big)$.

(ii) If $L\subset X$, then
\begin{equation*}
\cI_X\otimes\cO_L\cong(\cI_X\otimes\cO_X)\otimes_{\cO_X}\cO_L\cong N_{X/\bP^r}^\vee\otimes\cO_L.
\end{equation*}
If $L\not\subset X$, then $\cI_X\otimes\cO_L$ is a subsheaf of $\cO_L$, and the isomorphism 
follows from the exact sequence
\begin{equation*}
0\to\cI_X\otimes\cO_L\to\cO_L\to\cO_{X\cap L}\to 0.
\end{equation*}

(iii) Restricting \textup{(\ref{eq16a})} to $L$, we obtain an exact sequence
\begin{equation*}
a_{n+2}\cO_L(-1) \to \cE_L \to \cI_X(2)\otimes\cO_L \to 0.
\end{equation*}
The kernel of the map on the right is torsionfree, hence locally free. By exactness it
is generated in degree $1$ and it embeds into a trivial bundle. This excludes all possibilities
for direct summands except $\cO_L$ and $\cO_L(-1)$.

(iv) The exact sequence
\begin{equation*}
0\to n_1 \cO_L(-1) \oplus n_2\cO_L\to\cE_L\to\cO_L(2-l)\to 0
\end{equation*}
splits because
\begin{equation*}
\Ext^1\big(\cO_L(2-l),n_1 \cO_L(-1) \oplus n_2\cO_L\big)=n_1 H^1\cO_L(l-3)\oplus n_2 H^1\cO_L(l-2)=0
\end{equation*}
for $l\ge 2$.

We note that the term $\cO_L(2-l)$ will only stand out in the splitting of $\cE_L$,
if $2-l\le -2$, i.e., if the line $L$ meets $X$ in at least 4 points. 
\end{proof}

The lower bound for $a_i$ in part (i) would follow immediately, if we knew that the 
twisted ideal sheaf $\cI_X(d+1+n-r)$ is globally generated. 
Noma has investigated a related stronger condition, whether a projective manifold can be cut out by cones of degree $d+1+n-r$, 
see \cite{Noma}.

\begin{lemma}\label{lem23}
Let $X\subset\bP^r$ be an irreducible nondegenerate projective manifold 
of dimension $n\ge 2$\textup{,} and let $L\subset\bP^r$ be a linear subspace 
of dimension $l$ that intersects $X$ in a subscheme of dimension $\le s$. 

If $n>s+l$\textup{,} then the intersection $H\cap X$ with a general 
hyperplane $H$ containing $L$ is also smooth and irreducible.
\end{lemma}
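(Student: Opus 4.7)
The plan is to reduce to Bertini's theorem for smoothness away from the base locus, combined with a direct dimension count at the base locus, and then to deduce irreducibility from connectedness via the Lefschetz hyperplane theorem.

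Set $Y = L \cap X$ (dimension $\le s$) and let $\Lambda \cong \bP^{r-l-1}$ be the linear system of hyperplanes of $\bP^r$ containing $L$. The base locus of $\Lambda|_X$ is precisely $Y$, so the classical Bertini theorem gives that, for a general $H \in \Lambda$, the intersection $H \cap X$ is smooth on $X \setminus Y$. What remains is smoothness at each point of $Y$, and then irreducibility.

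At a point $x \in Y$, the scheme $H \cap X$ is smooth iff $H$ is not tangent to $X$ at $x$, i.e., iff the projective tangent space $T_x X \subset \bP^r$ is not contained in $H$. The hypothesis $n > s + l$ yields $n > l = \dim L$, so $T_x X \not\subset L$ and $\langle L, T_x X\rangle$ is a proper linear subspace of $\bP^r$, of dimension $l + n - \dim(L \cap T_x X) \le l + n$. The hyperplanes in $\Lambda$ containing $T_x X$ are exactly those containing $\langle L, T_x X\rangle$ and thus form a subfamily of dimension at most $r - n - 1$ (using $\dim(L \cap T_x X) \le l$). Introduce the incidence
$$W = \{(x, H) \in Y \times \Lambda : T_x X \subset H\}.$$
Projecting to $Y$, the fiber dimensions are bounded by $r - n - 1$, so $\dim W \le s + (r - n - 1)$, which is strictly less than $\dim \Lambda = r - l - 1$ precisely when $n > s + l$. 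Hence the image of $W$ in $\Lambda$ is a proper closed subvariety, and a general $H \in \Lambda$ avoids it, yielding smoothness of $H \cap X$ along $Y$ as well.

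For irreducibility, $X$ is smooth projective of dimension $n \ge 2$, so the Lefschetz hyperplane theorem gives that every hyperplane section of $X$ is connected; a smooth connected scheme is automatically irreducible. The only delicate point I expect is the dimension count at $Y$: one must uniformly bound the fiber dimension of $W \to Y$ by $r - n - 1$, and this is exactly where the assumption $n > s + l$ is indispensable — weaker hypotheses would not force the locus of tangent hyperplanes through $L$ to be a proper subfamily of $\Lambda$.
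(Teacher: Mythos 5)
Your proposal is correct and follows essentially the same route as the paper: Bertini for smoothness away from the base locus $L\cap X$, an incidence-variety dimension count showing that hyperplanes through $L$ tangent to $X$ at some point of $L\cap X$ form a subfamily of codimension at least $n-l-s>0$ in the $(r-l-1)$-dimensional system, and connectedness (the paper cites Bertini, you cite Lefschetz) plus smoothness to get irreducibility. The only cosmetic difference is that you make the incidence correspondence $W$ explicit where the paper argues the same count informally.
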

\begin{proof}
Consider a point $x\in X$. The intersection $H\cap X$ will be nonsingular in $x$, if $H$ intersects the embedded 
tangent space $T_x$ transversely, i.e., if $H$ does not contain $T_x$.

Hyperplanes containing $L$ form a family of dimension $r-l-1$, and the subfamily containing $T_x$ for
a given point $x\in L\cap X$ has
codimension $\dim T_x/(L\cap T_x)\ge n-l$. As $x$ varies in a family of dimension $s$, we find that 
the subfamily failing to intersect $X$ transversely, has codimension at least $\dim T_x/(L\cap T_x)-s\ge n-l-s$.

Conversely, if $n-l-s>0$, then a general hyperplane containing $L$ will intersect $X$ transversely in all
points of $L\cap X$, hence will be nonsingular along $L\cap X$.

Nonsingularity of $H\cap X$ outside $L$ and connectedness now follow from Bertini's theorem.
\end{proof}

\begin{num}[Beilinson free monads]
Eisenbud, Fl\o ystad and Schreyer  \cite[8.11]{EFS} constructed a free monad corresponding 
to the Beilinson spectral sequence for arbitrary coherent sheaves. 
It consists of a complex
\begin{equation*}
0\to\cE_{-m}\to\cE_{-m+1}\to\dotsc\to\cE_m\to 0
\end{equation*}
where
\begin{equation*}
\cE_i=\ooplus_{p=0}^m H^{i-p}\big(\Omega^p(p)\otimes q_*p^*\cI_X(l)\big)\otimes\cO(-p)
\end{equation*}
with similar properties as our complex in (\ref{thm15}), i.e., the complex is exact at all positions except at 
$\cE_0$, and the homology at this term is isomorphic to the given sheaf.

However, a concrete bound requires the knowledge of the cohomology 
groups $H^q\big(\cF\otimes\Omega^p(p)\big)$, and not just of the Hilbert polynomial. Key advantage of our 
choice is that, for each $p$, all the cohomology $H^j\big(\Omega^p(p)\otimes q_*p^*\cI_X(1-n)\big)$ groups vanish
except the one for $j=n+1$.

One can also consider monads corresponding to the second Beilinson spectral 
sequence which arises from the bundle $\cB'=pr_1^*\cT_{\bP^r}\otimes pr_2^*\cO_{\bP^r}(1)$.
\end{num}

\begin{num}[Eliminating obstructions by extension]\label{num25}
Suppose that the sheaf $\cF$ satisfies $(C_k)$. If $\cF(1)$ also satisfies $(C_k)$, 
then the Beilinson spectral sequence for $\cF(1-k)$ will deliver a better 
bound for the regularity of $\cF$ than the sequence for $\cF(-k)$.

The obstructions to a lower bound are the groups $H^i\cF(-i)$ for $i<k$. 
Starting with the associated vector bundle $\cE$ from Theorem 
(\ref{thm15}), we can construct another vector bundle $\cE_1$ 
together with an epimorphism $f\colon\cE_1\to\cE$
with the following properties:
\begin{enumerate}
\item[(i)] $f$ induces an isomorphism $H^i \cE_1(l)\to H^i\cE(l)$ on all 
intermediate cohomology groups for $1\le i\le r-1$ and $l>-i$.
\item[(ii)] $H^i\cE_1(-i)=0$ for all $i<r$, i.e., $\cE_1^\vee$ is $(-1)$-regular.
\end{enumerate}

To find $\cE_1$, one uses descending induction on $i$. One easily checks that
\begin{align*}
H^i\cE(-i)' & \cong \Ext^{n-i}(\cE(-i),\cO(-r-1)) \\
& \cong \Ext^1\big(\cE(-i),\wedge^{r-i-1}\cT\otimes\cO(-r-1)\big) \\
& \cong \Ext^1\big(\cE, \wedge^{r-i-1}\cT\otimes \cO(i-r-1)\big)
\end{align*}
and replaces $\cE$ successively by the corresponding extensions.

For details, see \cite{Rat}.
\end{num}

\begin{num}[Cubics]
Working with $\cI_X(3)$ instead of $\cI_X(2)$ changes the requirements and the 
results of our calculation as follows:
\begin{enumerate}
\item[(i)] The projection requires $f$ to be finite and the weaker vanishing of 
$R^1q_*p^*\cI_X(3)$. This means that the projection requires only that 
$H^1\cI_{X\cap L}(3)$ vanishes for each fiber of $f$. Hence we obtain bounds 
for a wider range of $m$.
\item[(ii)] In line with (\ref{num25}), the regularity bound changes by
\begin{equation*}
-\rk\cE+h^0\cI_X(2)+ \textstyle\binom{m-1}{1}h^1\cI_X(1)+ \textstyle\binom{m-1}{2}h^1\cO_X.
\end{equation*}
\end{enumerate}

The theorems of Zak \cite[3.4.25]{Laz2} and Barth \cite[3.2.1]{Laz2} provide numerical conditions 
for the vanishing of $H^1\cI_X(1)$ and $H^1\cO_X$ in low codimension, leading to improved bounds.

Further improvements along the same line require conditions for the vanishing of
$H^1\cI_X(2)$, $H^1\cO_X(1)$ and $H^2\cO_X$, but these are only known for the
last of the three terms.
\end{num}

\section{Open problems}

\begin{num}[The advantage of projections]
The regularity is a measure for the size of the higher cohomology modules $\oplus_l H^i\cI_X(l)$, $(i\ge 1)$ of ideal shaves.
If we project $X$ into a lower-dimensional projective space, then the higher cohomology modules of $q_*p^*\cI_X$, 
as complex vector spaces, do not change (for $i\ge 2$) or possibly even increase (for $i=1$). Hence the regularity of the
projected sheaf $q_*p^*\cI_X(2)$ can be no better than the regularity of $\cI_X(2)$.

However, in our examples (and probably in general) the calculated regularity bound improves under projection.

Is there a good explanation for this counter-intuitive behavior? 
\end{num}

\begin{num}[Linear normality]
Let $X\subset\bP^r$ as usual with ideal sheaf $\cI_X$, and suppose that $X$ arises by linear projection from $X'\subset\bP^{r'}$
with corresponding ideal sheaf $\cI_{X'}$.

For $i\ge 2$, we have $H^i\cI_X(l)\cong H^{i-1}\cO_X(l)$, and these cohomology groups do not depend on whether the embedding
is linearly normal. 

For $i=1$, the short exact sequence
\begin{equation*}
0\to \cI_X(1)\to q_*p^*\big(\cI_{X'}(1)\big) \to \cO_{\bP^r}^{\oplus (r'-r)}\to 0
\end{equation*}
shows:
\begin{enumerate}
\item[(i)] The induced map of graded $SV$-modules 
\begin{equation*}
\ooplus_l H^1\cI_X(l) \to \ooplus_l  H^1 \big(q_*p^*\big(\cI_{X'}(1)\big)\otimes\cO_{\bP^r}(l-1)\big)
\end{equation*}
is surjective.
\item[(ii)] The kernel of this map (which we denote $\oplus_l K(l)$) is generated by $r'-r$ elements of degree $1$.
\end{enumerate}

Hence it suffices to obtain vanishing bounds for $\oplus_l K(l)$ and for 
$\oplus_l  H^1 \big(q_*p^*\big(\cI_{X'}(1)\big)\otimes\cO_{\bP^r}(l-1)\big)$. 

We wonder whether the following hold:
\begin{enumerate}
\item[1.] $K(l)=0$ for $l\ge r'-r+1$,
\item[2.] $H^1 \big(q_*p^*\big(\cI_{X'}(1)\big)\otimes\cO_{\bP^r}(l-1)\big)=H^1\cI_{X'}(l)$.
\end{enumerate}

If true this would reduce the conjecture to the case of linearly normal projective embeddings.
\end{num}

\begin{num}[Linear projections]
Suppose $X\subset\bP^r$ is projected from a linear subspace $L$ into $\bP^m$, and 
choose coordinates in $\bP^r$ such that $L$ is defined by $T_0=T_1=\dotsc=T_{r-m-1}=0$.

If the map of sheaves
\begin{equation*}
q_*p^*\cI_X(2)\otimes\cO_{\bP^m}(l-2)\to f_*\cO_X(l)
\end{equation*}
is surjective on global sections for $l\gg 0$, then every section of $\cO_X(l)$ is induced 
by a hypersurface of $\bP^r$ of the form
\begin{equation*}
\sum P_{ij} T_i T_j+\sum Q_i T_i +R \qquad (r-m\le i,j\le r)
\end{equation*}
for this fixed set of coordinates, where $P_{ij}\in H^0\cO_{\bP^m}(l-2)$, $Q_i\in H^0\cO_{\bP^m}(l-1)$ 
and $R\in H^0\cO_{\bP^m}(l)$ (see \cite[1.5]{Laz1}).

This assumption may be too optimistic, in particular for $X$ of low codimension. 
A more promising assumption might be the following:
\medskip
\begin{enumerate}
\item[(A)] For every section of $\cO_X(l)$, there exist coordinates in $\bP^r$ such that the 
section is induced by a hypersurface of the form above.
\end{enumerate}
\medskip
To investigate this question, one should study all linear projections simultaneously.
\end{num}

\bigskip
\textsc{21 rue de Saussure, 75017 Paris, France}

\medskip
\textit{E-mail}: {\tt jk.rathmann@gmail.com}

\end{document}